\documentclass{amsart}
\usepackage[english]{babel}
\usepackage[utf8]{inputenc}
\usepackage{amsmath, amssymb,epic,graphicx,mathrsfs,enumerate}
\usepackage[all]{xy}
\usepackage{color}
\usepackage{comment}
\usepackage{enumitem}
\usepackage{hyperref}
\usepackage[]{frontespizio}
\usepackage{booktabs}
\usepackage{array}

\usepackage{amsmath}
\usepackage{amsthm}
\usepackage{amssymb}
\usepackage{latexsym}
\usepackage{epsfig}

\DeclareMathOperator{\perm}{Sym}

\DeclareMathOperator{\aut}{Aut} 
\DeclareMathOperator{\inn}{Inn} 
 
\DeclareMathOperator{\soc}{soc}

\DeclareMathOperator{\psl}{PSL}

\DeclareMathOperator{\psp}{PSp}

\DeclareMathOperator{\M}{M}

\DeclareMathOperator{\psu}{PSU}

\newcommand{\alt}{\mathrm{Alt}}

\newtheorem{thm}{Theorem}

 \newtheorem{lemma}[thm]{Lemma}
\newtheorem{prop}[thm]{Proposition} 
 \newtheorem{defn}[thm]{Definition}

\numberwithin{equation}{section}

\renewcommand{\footnote}{\endnote}
\newcommand{\ignore}[1]{}\makeglossary

\begin{document}
	\bibliographystyle{amsplain}
	\subjclass[2020]{ 20D60, 20P05}
	\keywords{nilpotent groups, solvable groups, probability}
\title[The nilpotency probability]{Characterizing finite solvable groups through the nilpotency probability}

\author{Andrea Lucchini}
\address{Andrea Lucchini\\ Universit\`a di Padova\\  Dipartimento di Matematica \lq\lq Tullio Levi-Civita\rq\rq\\ Via Trieste 63, 35121 Padova, Italy\\email: lucchini@math.unipd.it}

\begin{abstract}
Given a finite group $G$, we denote by $\nu(G)$ the probability that two randomly chosen elements of $G$ generate a nilpotent subgroup. We prove that if $\nu(G)>1/12,$ then $G$ is solvable.
\end{abstract}

\maketitle

\section{Introduction}

Given a finite group $G$, let $\mathcal N(G)$ be the set of ordered pairs $(g_1,g_2)\in G^2$ with the property that
$\langle g_1, g_2\rangle$ is a nilpotent subgroup of $G$. Then
$$\nu(G):=\frac{|\mathcal N(G)|}{|G|^2}$$
expresses the probability that two randomly chosen elements of $G$ generate a nilpotent subgroup.
Guralnick and Wilson \cite{gw} proved that if $\nu(G) > \nu(\perm(3)) = 1/2$, then $G$ is nilpotent. However, the following question, raised in \cite{cgk}, remains open: does $\nu(G) > 1/12$ imply that $G$ is solvable? In this note, we provide an affirmative answer.


\begin{thm}\label{main}
	Let $G$ be a finite group. If $\nu(G)>\frac{1}{12},$ then
	$G$ is solvable.
\end{thm}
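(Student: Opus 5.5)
We argue by contradiction, taking $G$ to be a non-solvable group with $\nu(G)>1/12$ of minimal order, and we first reduce to an almost simple configuration. Two basic facts drive the reduction. First, if $N\trianglelefteq G$ then $\nu(G)\le \nu(G/N)$: if $\langle g_1,g_2\rangle$ is nilpotent, so is its image in $G/N$. Second, if $H\le G$ then $\nu(G)\le \nu(H)$, by the usual coset-counting argument (as for commuting probability): for fixed $g_1$, the elements $g_2$ with $\langle g_1,g_2\rangle$ nilpotent can be counted coset by coset of $H$. Since $\nu(G/R)\ge \nu(G)$ where $R$ is the solvable radical, minimality gives $R=1$. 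Then $G$ has a minimal normal subgroup $N=S^t$ with $S$ nonabelian simple.

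We next control $\nu$ through the structure of $N$. For the pair-counting, the key observation is that $\langle g_1,g_2\rangle$ is nilpotent if and only if it is the direct product of its Sylow subgroups. Following the strategy used for the commuting probability (Guralnick--Robinson, where $\mathrm{cp}(G)>3/40$ forces solvability), we aim for a bound of the form
$$\nu(G)\le \nu(G/N)\cdot \max_{x\in G/N}\frac{|\{n\in N : \ldots\}|}{|N|},$$
that is, we fibre over $G/N$. For pairs $(g_1,g_2)$ lying in fixed cosets $x_1N,x_2N$, we bound the proportion of $(n_1,n_2)\in N^2$ with $\langle x_1n_1,x_2n_2\rangle$ nilpotent by a constant $c(S)$, and the aim is to show $c(S)\le 1/12$. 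The guiding example is $\nu(\alt(5))$: a nilpotent $2$-generated subgroup of $\alt(5)$ is cyclic or a Klein four group, so a direct count gives $\nu(\alt(5))$ well below $1/12$. If $t>1$, the proportion should be roughly multiplicative across the factors and hence even smaller.

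The main obstacle is the coset estimate for an arbitrary almost simple $G$ with socle $S$. Here we need that, for any $x\in\aut(S)$, the proportion of pairs in $xS\times yS$ generating a nilpotent subgroup is at most $1/12$. Our approach to this step is as follows.

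\begin{itemize}[label={}]
\item \emph{Reduction to a cyclic quotient.} We bound $\nu$ by the probability that $g_1,g_2$ generate a nilpotent group. Since $\langle g_1,g_2\rangle N/N$ is nilpotent in any case, we may restrict attention to the case where $\langle g_1,g_2\rangle$ is contained in a subgroup of $G$ with cyclic image modulo $N$.
\item \emph{Conditioning on $g_1$.} We use that, for fixed $g_1$, the admissible $g_2$ lie in the set of elements $g$ such that $\langle g_1,g\rangle$ is nilpotent. This set is contained in the union over primes $p$ of $C_G(g_{1,p'})\cdot(\text{$p$-part})$, which lies inside $\bigcup_p N_G(P)$ for Sylow $p$-subgroups $P$ containing the $p$-part of $g_1$.
\item \emph{Counting via Sylow normalizers.} This gives $\nu(G)\le \sum_{p}|G:N_G(P)|\cdot(|N_G(P)|/|G|)^2$ roughly, which equals $\sum_p 1/|G:N_G(P)|$ corrected by overlaps. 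We then use the CFSG-based fact that Sylow normalizers in almost simple groups have small index; for instance, $|S:N_S(P)|\ge 5$, with equality only for $\alt(5)$.
\end{itemize}

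The delicate part is to carry this through uniformly with explicit constants, handling small cases ($\alt(5)$, $\psl(2,7)$, $\psl(2,q)$) by direct computation. This is where we expect most of the work.
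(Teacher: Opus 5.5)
Your outer reduction (quotient monotonicity, passing to a nonabelian minimal normal subgroup $N=S^t$, fibring over pairs of cosets of $N$) has the same skeleton as the paper's, but both substantive steps are left open. (The subgroup monotonicity you quote is never used. The commuting-probability argument also does not transfer, because $\{g:\langle g_1,g\rangle\text{ nilpotent}\}$ need not be a subgroup.)

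For $t>1$ you only assert that the coset proportion is ``roughly multiplicative''. When $x_1,x_2$ permute the factors, the condition on $(n_1,n_2)\in N^2$ couples the coordinates, so an argument is needed. The paper replaces $n_1g_1,n_2g_2$ by the powers $(n_1g_1)^e,(n_2g_2)^f$, where $e,f$ are the lengths of the cycles through $1$ of the induced permutations. These powers stabilise the first factor, and projecting to $\aut S$ gives $s_1\gamma_1,\,t_1\gamma_2$, with $\gamma_1,\gamma_2\in\aut S$ depending only on the remaining coordinates. This yields $\nu_{g_1,g_2}(G,N)\le\tilde\nu(S)$ without any multiplicativity, but with $\gamma_1,\gamma_2$ arbitrary in $\aut S$.

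The real gap is the almost simple estimate $\nu_{a_1,a_2}(\aut S,S)\le 1/12$ for all $a_1,a_2\in\aut S$, and the route you sketch cannot deliver it.
\begin{itemize}
\item The reduction to cyclic image modulo $N$ is unjustified. $\mathrm{Out}(\alt(6))$ and $\mathrm{Out}(\psl(3,4))$ contain $C_2\times C_2$, and every coset pair must be controlled.
\item The inclusion of $\{g:\langle g_1,g\rangle\text{ nilpotent}\}$ in a union of Sylow normalisers is only clear when $\langle g_1,g\rangle$ is a $p$-group. For example, $\langle g_1,g_1\rangle$ is cyclic, yet an element $g_1\in\alt(24)$ of cycle type $3^35^3$ normalises no nontrivial Sylow subgroup.
\item Most decisively, even granting that inclusion, your bound $\sum_p|N_G(P)|/|G|$ equals $1/5+1/10+1/6=7/15$ for $\alt(5)$.
\item Nor is $\nu(\alt(5))$ ``well below'' $1/12$. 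The pairs generating $1$, $C_2$, $C_3$, $C_5$, $C_2\times C_2$ number $1+45+80+144+30=300=3600/12$. So the theorem is sharp at $\alt(5)$, and any estimate must be exact there, also on the coset pair coming from $\perm(5)\setminus\alt(5)$.
\end{itemize}

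The missing idea is to count the complement. Pairs $(s_1a_1,s_2a_2)$ generating a subgroup that contains $S$ are never nilpotent, so $\tilde\nu(S)\le 1-\tilde\pi(S)$. Here $\tilde\pi(S)$ is the minimum over $a_1,a_2\in\aut S$ of the proportion of $(s_1,s_2)\in S^2$ with $S\le\langle s_1a_1,s_2a_2\rangle$.

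CFSG-based results (Dixon, Kantor--Lubotzky, Liebeck--Shalev, and the explicit bounds of Menezes et al.) give $\tilde\pi(S)>11/12$ unless $S$ is alternating or one of ten listed groups. Those ten groups and $\alt(5),\dots,\alt(9)$ are settled exactly via tables of marks. For $\alt(n)$ with $n\ge 10$, the paper additionally counts pairs generating a point stabiliser, which brings the bound below $1/12$.
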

Since $\nu(\alt(5))=1/12,$ the previous result is sharp.

\section{Proof of Theorem \ref{main}}

Let us start this section with a couple of definitions.

\begin{defn}Let $G$ be a finite group, $N$ a normal subgroup of $G$ and $g_1, g_2\in G.$ We define
	$$\begin{aligned}\mathcal N_{g_1,g_2}(G,N)&:=\{(n_1,n_2)\in N^2\mid\langle n_1g_1, n_2g_2\rangle \text { is nilpotent }\},\\\nu_{g_1,g_2}(G,N)&:=\frac{|\mathcal N_{g_1,g_2}(G,N)|}{|N|^2}.
		\end{aligned}$$
\end{defn}
\begin{defn}
	Let $S$ be a finite nonabelian simple group.
	We identify $S$ with the subgroup of its inner automorhisms and we define
	$$\tilde\nu(S)=\max_{a_1,a_2\in \aut S}\nu_{a_1,a_2}(\aut S, S).$$
\end{defn}
\begin{prop}\label{mono}
	Assume that a finite group $G$ contains a unique minimal normal subgroup, say $N$, and that $N$ is nonabelian. Then
	$$\nu(G)\leq \tilde\nu(S),$$
	where $S$ is a composition factor of $N.$
\end{prop}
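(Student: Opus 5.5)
Since $\nu(G)$ is an average over cosets, we reduce to a single coset and then to one simple factor. We have
$$\nu(G)=\frac{1}{|G/N|^2}\sum_{(g_1N,g_2N)}\nu_{g_1,g_2}(G,N),$$
because $\mathcal N(G)$ is the disjoint union over pairs of cosets $(g_1N,g_2N)$ of the sets $\{(n_1g_1,n_2g_2)\}$ with $(n_1,n_2)\in\mathcal N_{g_1,g_2}(G,N)$. So it suffices to prove $\nu_{g_1,g_2}(G,N)\le\tilde\nu(S)$ for every $g_1,g_2\in G$.

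Write $N=S_1\times\dots\times S_k$ with $S_i\cong S$. Because $N$ is the unique minimal normal subgroup, $C_G(N)=1$. Hence $G$ embeds in $\aut N=\aut S\wr \perm(k)$, and $G$ permutes the factors $S_i$ transitively. Fix $g_1,g_2$ and let $H=\langle g_1,g_2\rangle N$. Choose an orbit of $\langle g_1,g_2\rangle$ on the factors; after relabeling it is $\{S_1,\dots,S_m\}$. Suppose $\langle x_1,x_2\rangle$ is nilpotent, where $x_i=n_ig_i$. Then $K=\langle x_1,x_2\rangle$ acts on the factors with the same orbits as $\langle g_1,g_2\rangle$, since $N$ acts trivially on the set of factors. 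Let $K_1$ be the stabilizer of $S_1$ in $K$ and $\pi:K_1\to\aut S_1$ the restriction map.

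The crucial step, which I expect to be the main obstacle, is to find two elements of $K_1$ generated by explicit words in $x_1,x_2$ whose images generate the image of a nilpotent group, and whose projections to $S_1$ inside $\aut S_1$ range over cosets of $S_1$ "uniformly". The simplest approach is to use transitivity of $K$ on $\{1,\dots,m\}$ and choose a transversal $t_1=1,\dots,t_m$ of words in $x_1,x_2$ (for instance via a spanning tree of the Schreier graph). Then $y_{j,i}=t_j x_i t_{\sigma_i(j)}^{-1}$ lies in $K_1$ after conjugation, and $\pi(K_1)$ is nilpotent because $K_1\le K$ is. In particular $\langle \pi(y),\pi(y')\rangle$ is nilpotent for any two such elements. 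Standard wreath-product bookkeeping (as in Lucchini's earlier results on probabilistic generation in groups with a unique minimal normal subgroup) lets us choose $y,y'$ as products of the form $x_{i_1}\cdots x_{i_r}$ that stabilize $S_1$. Natural choices are the first-return words $y=x_1^{r_1}$, where $r_1$ is the length of the $\langle x_1\rangle$-cycle through $1$, and a suitable $y'$ built from $x_2$ and the transversal.

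Next comes the counting. The $S_1$-component of $\pi(y)$ has the form $s\cdot\alpha$, where $s$ is the $S_1$-coordinate of $n_1$ and $\alpha\in\aut S$ depends only on $g_1$ and on the coordinates of $n_1$ outside $S_1$. If the coordinates are chosen in the right order, $\pi(y')$ similarly equals $s'\cdot\beta$, where $s'$ is a coordinate of $n_2$ not involved in $y$ (for instance at an index where the $x_2$-edge leaves the tree). Therefore, once the remaining coordinates of $(n_1,n_2)$ are fixed, the fraction of choices of $(s,s')\in S^2$ making $\langle s\alpha,s'\beta\rangle$ nilpotent is at most $\nu_{\alpha,\beta}(\aut S,S)\le\tilde\nu(S)$. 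Averaging over the remaining coordinates gives $\nu_{g_1,g_2}(G,N)\le\tilde\nu(S)$.

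The delicate point is to ensure that the two free coordinates $s,s'$ enter $\pi(y),\pi(y')$ exactly once and independently. If the $\langle g_1,g_2\rangle$-orbit is a single point ($m=1$), this is immediate. In general I would pick $y$ and $y'$ so that $s$ is the $S_1$-coordinate of $n_1$ and $s'$ is the coordinate of $n_2$ at an index of a non-tree edge, or at $1$ if $x_2$ fixes $1$, and then check by a direct computation that neither coordinate appears in the other word.
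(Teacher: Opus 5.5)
Your outline has the same architecture as the paper's proof. You average over pairs of cosets of $N$, embed $G$ in $\aut S\wr\perm(k)$, pass to words in $x_1,x_2$ that stabilize the first factor, project to $\aut S$, and count over one free coordinate of $n_1$ and one of $n_2$. Your $y=x_1^{r_1}$ is exactly the paper's $(n_1g_1)^e$. But as written the argument is not finished. The second word $y'$ is never actually specified (``a suitable $y'$ built from $x_2$ and the transversal''). The check that the free coordinates enter correctly, which you yourself call the main obstacle, is deferred to ``a direct computation''. That unfinished step is the gap.

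The missing idea is to treat $x_2$ exactly as you treat $x_1$.
\begin{itemize}
\item Let $f$ be the length of the cycle through $1$ of the permutation induced by $g_2$ on the factors. This is the same for $x_2$, since $N$ acts trivially on the set of factors. Put $y'=x_2^{f}$.
\item Then $y'\in K_1$ and $\pi(y')=s'\beta$, where $s'$ is the first coordinate of $n_2$ and $\beta$ depends only on $g_2$ and the other coordinates of $n_2$ along that cycle. The coordinate $s'$ occurs exactly once because the cycle visits $1$ once.
\item Since $y$ involves only coordinates of $n_1$ and $y'$ only coordinates of $n_2$, the independence you worry about is automatic.
\item Fixing all coordinates except $s$ and $s'$ leaves at most $|S|^2\nu_{\alpha,\beta}(\aut S,S)\le|S|^2\tilde\nu(S)$ nilpotent choices, which is the paper's count.
\end{itemize}
The transversal, spanning tree and choice of orbit are unnecessary. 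You only need $y,y'\in K_1$, not that their images generate $\pi(K_1)$, because every subgroup of the nilpotent group $\pi(K_1)$ is nilpotent.

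Your Schreier-generator variant can be made to work, but one point needs care. Your criterion that ``neither coordinate appears in the other word'' is stronger than needed, and it can fail when a transversal word traverses the $x_1$-edge carrying $s$. What is really needed is only that $s'$ occurs exactly once in $\pi(y')$ and not at all in $\pi(y)$. Then, for each fixed $s$, the map $s'\mapsto\pi(y')$ is a bijection onto a fixed coset of $S$ in $\aut S$.
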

\begin{proof}
Assume that $N=S^u,$ with $u\in \mathbb N.$ 	We may identify $G$ with a subgroup of the wreath product $\aut S\wr \perm(u).$  In this identification, any element of $G$ can be written in the form $(\alpha_1,\dots,\alpha_u)\rho,$ with $\alpha_1,\dots,\alpha_u \in \aut S$ and $\rho \in \perm(u).$ 
Moreover, we identify $S$ with the inner automorphism group $\inn(S)$ and therefore
$N=\{(s_1,\dots,s_u)\mid s_1,\dots,s_u \in S\}.$

Now let $$g_1=(a_1,\dots,a_u)\sigma,\quad g_2=(b_1,\dots,b_u)\tau \quad \in G.$$
Write $$\sigma=\sigma_1\cdots \sigma_c, \quad  \tau=\tau_1\cdots \tau_d,$$ as products of disjoint cyclic permutations and assume that 1 belongs to the supports of $\sigma_1$ and $\tau_1.$ Suppose
$$\sigma_1=(1,i_2,\dots,i_e), \quad \tau_1=(1,j_2,\dots,j_f).$$
Let $$X=\{(\alpha_1,\dots,\alpha_u)\rho \in G \mid \rho(1)=1\}.$$ The map $\phi\colon X\to \aut S$ sending $(\alpha_1,\dots,\alpha_u)\rho$ to $\alpha_1$ is a group homomorphism. Now let $n_1=(s_1,\dots,s_u), n_2=(t_1,\dots,t_u) \in N$. Then
$$(n_1g_1)^e, (n_2g_2)^f \in X$$ and
$$\phi((n_1g_1)^e)=s_1a_1s_{i_2}a_{i_2}\cdots s_{i_e}a_{i_e},\quad
\phi((n_2g_2)^f)=t_1b_1t_{j_2}b_{j_2}\cdots t_{j_f}b_{j_f}.
$$
Let
$$\gamma_1=a_{1}s_{i_2}a_{i_2}\cdots s_{i_e}a_{i_e},\quad \gamma_2=b_{1}t_{j_2}b_{j_2}\cdots t_{j_f}b_{j_f}.$$
If $(s_1,t_1)\notin  \mathcal N_{\gamma_1,\gamma_2}(\aut S,S),$ then
	$$\phi(\langle (n_1g_1)^{e}, (n_2g_2)^f\rangle)=
	\langle s_1\gamma_1,t_1\gamma_2\rangle$$ is not nilpotent. In particular $\langle n_1g_1, n_2g_2\rangle$ is not nilpotent. Thus, given $s_i, t_j$ for $i\neq 1$ and $j\neq 1$, there are at least $$|S|^2(1-\nu_{\gamma_1,\gamma_2}(\aut S, S))\geq |S|^2(1-\tilde\nu(S))$$ choices for 
	$s_1$ and $t_1$ such that $\langle n_1g_1, n_2g_2\rangle$ is not nilpotent. It follows that there
	are at least $|S|^{2u}(1-\tilde \nu(S))$ pairs $(n_1,n_2)\in N^2$ such that $\langle n_1g_1,n_2g_2\rangle$ is not nilpotent. This is equivalent to saying that $\nu_{g_1,g_2}(G,N)\leq \tilde \nu(S)$ for every $(g_1,g_2) \in G^2.$ But then
	$$\begin{aligned}\nu(G)|G|^2=&\sum_{(Ng_1,Ng_2)\in  (G/N)^2}\nu_{g_1,g_2}(G,N)\\&\leq \sum_{(Ng_1,Ng_2)\in (G/N)^2}|N|^2\tilde\nu(S)\\&\leq |G|^2\tilde\nu(S),
	\end{aligned}$$
	and consequently $\nu(G)\leq \tilde \nu(S).$
\end{proof}
\begin{prop}\label{simple}
	If $S$ is a finite nonabelian simple group, then 
	$$\tilde \nu(S)\leq \tilde \nu(\alt(5))=\frac{1}{12},$$ with equality only if $S=\alt(5).$
\end{prop}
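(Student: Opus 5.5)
We need to bound $\nu_{a_1,a_2}(\aut S,S)$ uniformly over all $a_1,a_2\in\aut S$. The first step is a reduction to a count of single elements, carried out separately for each coset. Fix $a_1,a_2$ and write $x=s_1a_1$ and $y=s_2a_2$ with $s_1,s_2\in S$. If $\langle x,y\rangle$ is nilpotent, then $y$ commutes with the $p'$-part of $x$ and normalizes the Sylow subgroups of $\langle x\rangle$. So for each fixed $x$ in the coset $Sa_1$, the number of admissible $y\in Sa_2$ is at most the number of elements of $Sa_2$ lying in $N_{\aut S}(\langle x_p\rangle)$ for a suitable prime-power part $x_p\neq 1$ of $x$. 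For $x\neq 1$ this gives the bound $|N_{\aut S}(\langle x_p\rangle)\cap Sa_2|\le |N_S(\langle x_p\rangle)|$. Summing over $x$ yields
$$\nu_{a_1,a_2}(\aut S,S)\le \frac{1}{|S|}+\max_{x\in Sa_1\setminus\{1\}}\frac{|N_S(\langle x_p\rangle)|}{|S|}.$$
A sharper version groups the elements $x$ by the subgroup they determine. The nilpotent subgroups of the almost simple group $\langle S,a_1,a_2\rangle$ generated by such pairs lie in the normalizers of nontrivial nilpotent subgroups. The quantity to bound is then roughly $\sum_{x}|C\text{-type set}|/|S|^2$, which is controlled by $\max |N_{\aut S}(P)\cap \langle S,a_1,a_2\rangle|/|S|$ over nontrivial $p$-subgroups $P$, plus lower-order terms. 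Thus everything reduces to showing that $S$ has no "large" normalizers of prime-order subgroups relative to $|S|$, in the appropriate coset-sensitive sense.

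The second step runs through CFSG and splits into two regimes. For groups $S$ of large order, I would use known bounds on the ratio $\max_{1\neq x}|C_{\aut S}(x)|/|S|$ and on the maximal subgroups of almost simple groups. Maximal subgroups have index at least the minimal permutation degree, which grows with $|S|$, for example $m(S)\ge 5$ with the sporadic list handled explicitly. The goal is an estimate of the form $\tilde\nu(S)\le c/m(S)+$ small terms. When $m(S)$ is large, say $m(S)\ge 13$, this will give $\tilde\nu(S)<1/12$. It uses the fact that any nilpotent subgroup containing a nontrivial element lies in a proper subgroup normalized within $\langle S,a_1,a_2\rangle$, so its cosets meet $Sa_i$ in at most $|S|/m(S)$ elements. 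The count becomes $\frac{1}{|S|^2}\sum_{x}|\{y\}|\le \frac{1}{|S|}+\frac{k}{m(S)}$, where $k$ accounts for how many maximal overgroups a typical $x$ can be forced into. Here a better tool is the fixed-point-ratio estimate, using the fact that elements commuting with $x$ fix $x$'s "fixed" structure.

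The third step treats the finitely many small cases, those with $m(S)\le 12$. These are $\alt(n)$ for $n\le 12$, $\psl(2,q)$ for small $q$, $\psl(3,3)$, $\psl(3,4)$, $\psu(3,3)$, $\psp(4,3)$, and $\M_{11}$, $\M_{12}$. For each of these I would compute $\nu_{a_1,a_2}(\aut S,S)$ directly, for every pair of cosets of $S$ in $\aut S$, which is a finite computation suitable for GAP. For $\alt(5)$ I would verify that the maximum is exactly $1/12$, attained for $a_1,a_2\in S$, that the outer cosets of $\perm(5)$ give strictly less, and that every other group gives strictly less than $1/12$. The outer cosets require care, since $\nu_{a_1,a_2}$ with $a_i\notin S$ is not an honest probability on a group. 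I expect the main obstacle to be the uniform treatment of the infinite families, especially $\psl(2,q)$, where normalizers of tori have index about $q/2$ and elements in outer cosets (field and diagonal automorphisms) have large centralizers. For these I would do an explicit computation using the conjugacy-class and centralizer structure of $\mathrm{P\Gamma L}(2,q)$. I would bound the number of pairs generating a nilpotent subgroup by summing $|C(x)|$-type quantities over classes. This gives roughly $\nu\approx \frac{\#\text{classes-ish}}{|S|}+\frac{c}{q}$, which falls below $1/12$ for $q$ beyond a small explicit threshold, and the remaining small $q$ go to the computer check.
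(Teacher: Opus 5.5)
Your proposal has a genuine gap: nothing in it actually handles the infinite families.

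The lemma in your first step is false. In a nilpotent group $\langle x,y\rangle$ the element $y$ need not normalize $\langle x_p\rangle$; take two non-commuting involutions generating $D_8\le\alt(6)$. Even if the lemma held, a per-$x$ normalizer bound is far too crude for a sharp constant. In $\alt(5)$ an element $x$ of order $5$ has $|N_S(\langle x\rangle)|/|S|=1/6>1/12$, although only the $5$ elements of $\langle x\rangle$ are admissible partners.

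Your second step is meant to cover all $S$ with $m(S)\ge 13$, but it has only one mechanism: a nilpotent $\langle s_1a_1,s_2a_2\rangle$ lies in a maximal subgroup of $\langle S,a_1,a_2\rangle$ not containing $S$. Counting over such subgroups can never give better than $\nu_{a_1,a_2}(\aut S,S)\le 1-\pi_{a_1,a_2}(S)$. Here $\pi_{a_1,a_2}(S)$ is the proportion of $(s_1,s_2)\in S^2$ with $S\le\langle s_1a_1,s_2a_2\rangle$. This bound is not below $1/12$ in that range. Take $S=\alt(13)$, so $m(S)=13$. Two random elements of $\perm(13)$ generate an intransitive subgroup with probability about $0.0866$; non-generation for $\alt(n)$ behaves like $1/n+1/n^2+4/n^3+\cdots$. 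Hence for at least one of the four coset pairs $(a_1,a_2)$, more than $1/12$ of the pairs $(s_1,s_2)$ fail to generate, and your bound $\frac{1}{|S|}+\frac{k}{m(S)}$ is larger still. So $\alt(13)$ falls through both of your regimes. Moreover, $c$ and $k$ are never controlled, and the fixed-point-ratio and class-counting remarks for $\psl(2,q)$ are not estimates.

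The paper gets past exactly this obstruction with two ingredients you lack.
\begin{itemize}
\item It uses explicit, not asymptotic, lower bounds for the coset generation probability $\tilde\pi(S)$ from the literature. These give $\tilde\pi(S)>11/12$ for every non-alternating $S$ outside an explicit finite list, which is then settled by exact table-of-marks computations.
\item For $\alt(n)$ with $n\ge 10$, it sharpens $1-\tilde\pi$ by noting that pairs generating exactly a point stabilizer $Y_{a_1,a_2}\cap X_i$ are also non-nilpotent. This subtracts a further $\tilde\pi(\alt(n-1))/n$, which offsets the $1/n$ coming from intransitive pairs.
\end{itemize}

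If you prefer to stay with local subgroups, here is the correct form of your first step. If $1\ne\langle x,y\rangle$ is nilpotent, its centre contains some $z$ of prime order, so $x,y\in C_{\aut S}(z)$. This gives $\nu_{a_1,a_2}(\aut S,S)\le |S|^{-2}+\sum_Z |C_S(Z)|^2/|S|^2$, summed over subgroups $Z\le\aut S$ of prime order. This summed (not maximized) bound is about $2/n^2$ for $\alt(n)$. For the Lie-type families, however, you would still have to supply explicit bounds on the number of classes and on centralizer orders, and the remaining small groups must be computed.
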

\begin{proof}
	Given $a_1,a_2 \in \aut S$, let $\mathcal P_{a_1,a_2}(S)=\{(s_1,s_2)\in S^2\mid S\leq \langle s_1a_1,s_2a_2\rangle\}$. We define
		$$\pi_{a_1,a_2}(S):=\frac{|\mathcal P_{a_1,a_2}(S)|}{|S|^2}, \quad
\tilde \pi(S)=\min_{a_1,a_2\in \aut S}\pi_{a_1,a_2}(S).$$
Clearly if $(s_1,s_2) \in \mathcal P_{a_1,a_2}(S),$ then
$\langle s_1a_1, s_2a_2\rangle$ is not nilpotent, and therefore $(s_1,s_2) \notin \mathcal N_{a_1,a_2}(\aut S,S).$
Thus, $$\pi_{a_1,a_2}(S)\leq 1-\nu_{a_1,a_2}(\aut(S),S)$$ for every $a_1, a_2\in \aut S,$ and therefore $$\tilde \nu(S)\leq 1-\tilde\pi(S).$$
Results of Dixon \cite{dix}, Kantor–Lubotzky \cite{kl} and Liebeck–Shalev [ls] establish
that $\tilde\pi(S) \to 1$ as $|S| \to \infty.$ 
Explicit lower bounds for $\tilde\pi(S)$ are given in \cite{nina}, where in particular all nonabelian simple groups $S$ with $\tilde\pi(S)\leq 9/10$ are listed. Since $1-9/10=1/10>1/12$, this is not enough for our purposes, but  more detailed information can be found in \cite{tmenezes}. In particular, combining \cite[Table 5.1, Theorem 6.01, Lemma 7.1.1]{tmenezes}, it turns out that if $S$ is not an alternating group, then either $\tilde \pi(S)> \frac{11}{12}$ (and consequently $\tilde\nu(S) < \frac{1}{12}),$ or
$S$ belongs to the family $\mathcal S$  of the following groups:  $$\begin{aligned}&\psl(2,7), \psl(2,8), \psl(2,11), \psl(2,13), \psl(3,3),\\& \psl(3,4), \psp(6,2), \psu(4,2), \mathrm{M}_{11},\M_{12}.\end{aligned}$$ The GAP Library of Tables of Marks \cite{gap} contains the tables of marks of all almost simple groups whose socle belongs to $\mathcal S$, and using them the exact value of $\tilde{\nu}(S)$, reported in Table \ref{tabnu}, can be easily obtained (see Section 3 for more details).
\begin{table}[h]
	\centering
	\renewcommand{\arraystretch}{1.4}
	\begin{tabular}{cc}
		\toprule
		\textbf{$S$} & \textbf{$\tilde{\nu}(S)$} \\
		\midrule
		$\mathrm{PSL}(2,7)$  & $\tfrac{3}{56}    \approx 0.0536$ \\
		$\mathrm{PSL}(2,8)$  & $\tfrac{1}{56}    \approx 0.0179$ \\
		$\mathrm{PSL}(2,11)$ & $\tfrac{2}{165}   \approx 0.0121$ \\
		$\mathrm{PSL}(2,13)$ & $\tfrac{3}{364}   \approx 0.0082$ \\
		$\mathrm{PSL}(3,3)$  & $\tfrac{1}{234}   \approx 0.0043$ \\
		$\mathrm{PSL}(3,4)$  & $\tfrac{5}{4032}  \approx 0.0032$ \\
		$\mathrm{PSU}(4,2)$  & $\tfrac{67}{23760} \approx 0.0026$ \\
		$\mathrm{PSp}(6,2)$  & $\tfrac{1}{4536}  \approx 0.0002$ \\
		$\M_{11}$   & $\tfrac{1}{440}   \approx 0.0023$ \\
		$\M_{12}$   & $\tfrac{7}{11880} \approx 0.0006$ \\
		\bottomrule
	\end{tabular}
	\caption{$\tilde\nu(S)$ for $S\in \mathcal S$}
	\label{tabnu}
\end{table}

Now assume that $S=\alt(n)$. Using GAP, and in particular the library of table marks, it can be checked 
that
$$\tilde\nu(\alt(5))=\tfrac{1}{12}, \
\tilde\nu(\alt(6))=\tfrac{1}{36}, \
\tilde\nu(\alt(7))=\tfrac{1}{210},\
\tilde\nu(\alt(8))=\tfrac{19}{9720}, \
\tilde\nu(\alt(9))=\tfrac{1}{2160}.$$
Now suppose $n\geq 10$ and, for $1\leq i\leq n,$ let $$X_i=
\{s\in \perm(n)\mid s(i)=i\}\cong \perm(n-1),\quad Y_i=X_i\cap \alt(n)\cong \alt(n-1).$$
Consider $a_1,a_2\in \perm(n)$ and let $Y_{a_1,a_2}=\langle a_1,a_2,\alt(n)\rangle.$ Moreover, let
$$\Omega_0=\{(s_1,s_2)\in (\alt(n))^2\mid \langle s_1a_1,s_2a_2\rangle=Y_{a_1,a_2}\},$$
and,  for $1\leq i\leq n$, let 
$$\Omega_i=\{(s_1,s_2)\in (\alt(n))^2\mid  \langle s_1a_1,s_2a_2\rangle = Y_{a_1,a_2}\cap X_i\}.$$
Notice that $\Omega_0, \Omega_1,\dots,\Omega_n$ are disjoint subsets of $(\alt(n))^2$ and
$$\Omega_0\cup \Omega_1\cup \dots \cup \Omega_n\subseteq (\alt(n))^2\setminus \mathcal N_{a_1,a_2}(\perm(n),\alt(n)).$$ It follows that
$$\nu_{a_1,a_2}(\perm(n),\alt(n))\leq 1-\sum_{0\leq i\leq n}\frac{|\Omega_i|}{|\alt(n)|^2}.
$$
For $1\leq i\leq n,$ there exists $s_1, s_2\in \alt(n)$ such that $\langle s_1a_1, s_2a_2\rangle \leq X_i$ and
$$|\Omega_i|=|Y_i|^2\pi_{s_1a_1,s_2a_2}(X_i,Y_i)\geq |\alt(n-1)|^2\tilde\pi(\alt(n-1)).$$
Moreover $$|\Omega_0|\geq |\alt(n)|^2\tilde\pi(\alt(n)).$$
Hence $$\begin{aligned}\nu_{a_1,a_2}(\perm(n),\alt(n))&\leq 
1-\tilde\pi(\alt(n))-n\left(\frac{|\alt(n-1)|^2\tilde\pi(\alt(n-1))}{|\alt(n)|^2}\right)\\&=1-\tilde \pi(\alt(n))-\frac{\tilde\pi(\alt(n-1))}{n}.\end{aligned}$$
It follows from  \cite{nina} that if $m\geq 9$ then $\tilde\pi(\alt(m))\geq \tilde\pi(\alt(9))=\frac{15403}{18144}$ and therefore
for, $n\geq 10,$
$$\begin{aligned}\tilde\nu(\alt(n))&\leq 1-\tilde \pi(\alt(n))-\frac{\tilde\pi(\alt(n-1))}{n}\\&\leq 1-\frac{15403}{18144}\left(1+\frac{1}{10}\right)=\frac{12007}{181440}\approx 0,0622.\qedhere
\end{aligned}
	$$
\end{proof}
\begin{proof}[Proof of Theorem \ref{main}]
Assume that $G$ is a nonsolvable finite group. Then $G$ admits a nonabelian chief factor $X/Y$. In particular $G/C_G(X/Y)$ has a unique minimal normal subgroup, and this minimal normal subgroup is isomorphic to $X/Y.$ It can be easily proved that $\nu(G)\leq \nu(G/N)$ for every normal subgroup $N$ of $G$ (see for example \cite[Lemma 2.1]{cgk}). This implies that $\nu(G)\leq \nu(G/C_G(X/Y)).$ Moreover, by Proposition \ref{mono}, $\nu(G/C_G(X/Y))\leq \tilde\nu(S),$ where $S$ is a composition factor of $X/Y.$ Since, by Proposition \ref{simple}, $\tilde \nu(S)\leq 1/12$, we conclude
$$\nu(G)\leq \nu(G/C_G(X/Y))\leq \tilde \nu(S)\leq \frac{1}{12}.\qedhere
$$
\end{proof}
\section{Details on the computation of $\tilde \nu(S)$}

To compute the exact value of $\tilde{\nu}(S)$, with $S \in \mathcal{S}$, the following observation is useful.

\begin{lemma}
	Let $N$ be a normal subgroup of a finite group $G$, and let $\Omega$ be the set of pairs $(g_1, g_2) \in G^2$ such that $\langle g_1, g_2 \rangle N = G$. Then the value of $\nu_{g_1, g_2}(G, N)$ is the same for every choice of $(g_1, g_2) \in \Omega$.
\end{lemma}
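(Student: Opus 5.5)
The plan is to reduce the statement to Gaschütz's lemma by sorting the nilpotent subgroups that occur according to which subgroup they are.

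First, $\nu_{g_1,g_2}(G,N)$ depends only on the cosets $Ng_1, Ng_2$. Indeed, as $(n_1,n_2)$ runs over $N^2$, the pair $(n_1g_1,n_2g_2)$ runs over all of $Ng_1\times Ng_2$. So fix $x_1=Ng_1$, $x_2=Ng_2$. The hypothesis $(g_1,g_2)\in\Omega$ says precisely that $\langle x_1,x_2\rangle=G/N$. It therefore suffices to show that
$$c(x_1,x_2):=\bigl|\{(h_1,h_2)\in x_1\times x_2 \mid \langle h_1,h_2\rangle \text{ nilpotent}\}\bigr|$$
is the same for every generating pair $(x_1,x_2)$ of $G/N$.

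Next I would regroup this count by the subgroup generated. Let $(h_1,h_2)$ be a pair counted in $c(x_1,x_2)$ and put $H=\langle h_1,h_2\rangle$. The images of $h_1,h_2$ generate $G/N$, so $HN=G$. Hence
$$c(x_1,x_2)=\sum_{H} \bigl|\{(h_1,h_2)\in H^2 \mid \langle h_1,h_2\rangle=H,\ Nh_1=x_1,\ Nh_2=x_2\}\bigr|,$$
where $H$ ranges over the nilpotent subgroups of $G$ with $HN=G$. This index set does not depend on $(x_1,x_2)$. If $G/N$ is not nilpotent the index set is empty, every $c(x_1,x_2)$ is $0$, and there is nothing to prove. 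So it is enough to show that each summand is independent of the generating pair $(x_1,x_2)$.

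For a fixed such $H$, put $M=H\cap N\trianglelefteq H$. The restriction of $G\to G/N$ to $H$ is onto with kernel $M$, giving an isomorphism $\psi\colon H/M\to G/N$. The summand then counts the generating pairs of $H$ lying over the pair $(\psi^{-1}(x_1),\psi^{-1}(x_2))$ of $H/M$, which is a generating pair of $H/M$. By Gaschütz's lemma, for a finite group $H$ with normal subgroup $M$, the number of generating $d$-tuples of $H$ lying over a given generating $d$-tuple of $H/M$ does not depend on that tuple (the case $d=2$ is what we need). This gives the claim.

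The only genuine input is Gaschütz's lemma, which I would simply cite; nilpotency of $H$ plays no role beyond defining the index set. The one point that needs care is the regrouping step: the condition $HN=G$ must follow automatically from $(x_1,x_2)$ generating $G/N$, which is exactly where the hypothesis $(g_1,g_2)\in\Omega$ is used.
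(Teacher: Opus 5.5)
Your proposal is correct and takes essentially the paper's approach. You split the nilpotent lifts according to the subgroup $H$ they generate (necessarily with $HN=G$), then apply Gasch\"utz's counting lemma to $H$ and $H\cap N$, so each piece has size independent of the generating pair. Your explicit use of the isomorphism $H/(H\cap N)\cong G/N$ to transfer the generating pair is a slightly more careful version of the step the paper cites directly from Gasch\"utz.
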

\begin{proof}
 Denote by $\mathcal H$ the set of  2-generated subgroups $H$ of $G$ that are nilpotent and such that $G=HN$. Assume that $(g_1,g_2)\in \Omega$ and,
for every $H\in \mathcal H$, let $\Delta(g_1,g_2,H)$ be the set of $(n_1,n_2)$ such that $H=\langle n_1g_1,n_2g_2\rangle.$
Then $\mathcal N_{g_1,g_2}(G,N)$ is the disjoint union of the subsets $\Delta(g_1,g_2,H)$, with $H$ ranging  over $\mathcal H.$

Fix $H\in \mathcal H$. Since $HN=G,$ there exist $n_1,n_2\in N$ such that $g_1=n_1h_1$ and $g_2=n_2h_2$ and therefore $(n_1^{-1},n_2^{-1})\in \Delta(g_1,g_2,H)$. Moreover
$$\Delta(g_1,g_2,H)=\{(m_1,m_2)\in (N\cap H)^2\mid \langle (m_1n_1)(n_1^{-1}g_1),(m_2n_2)(n_2^{-1}g_2)\rangle=H\}.$$
It follows from the main result in \cite{gaz} that
$|\Delta(g_1,g_2,H)|/|N\cap H|^2$ does not depend on the choice of $(g_1,g_2)$ and coincides with the conditional probability that two randomly chosen elements of $H$ generate $H$ given that they generate $H$ modulo $H\cap N.$ Consequently, the cardinality of the sets $\Delta(g_1,g_2,H)$, and hence also that of their disjoint union, does not depend on the choice of $(g_1, g_2)$.
\end{proof}
Given $S \in \mathcal{S}$, for every almost simple group $G$ with $\mathrm{soc}(G) = S$, we use the table of marks of $G$ to determine the nilpotent 2-generated subgroups of $G$ that supplement $\mathrm{soc}(G)$, and for each such subgroup we compute the number of ordered pairs that generate it. Let $\tau(G, S)$ be the sum of the numbers thus obtained, divided by $|S|^2$ and by the number of pairs that generate $G/S$. The resulting number coincides with $\nu_{g_1, g_2}(G, S)$, where $\langle g_1, g_2 \rangle S = G$. Finally, $\tilde{\nu}(S)$ is the maximum of the values $\tau(G, S)$ as $G$ ranges over the almost simple groups with socle $S$.

The case which requires more attention is when $S=\psl(3,4)$. In this case, $\mathrm{Out}S\cong C_2\times \perm(3)$ and there are 6 different subgroups $S\leq G\leq \aut S$ such that $G/S$ is nilpotent and the values of $\tau(G,S)$ are as in the following table.

\begin{table}[h]
	\centering
	\renewcommand{\arraystretch}{1.4}
	\begin{tabular}{ccccccc}
		\toprule
		\textbf{$G$} & $S$ & $S.{2_1}$ & $S.{2_2}$ & $S.{2_3}$ & $S.2^2$ & $S.6$ \\
		\midrule
		\textbf{$\tau(G,S)$} & $\tfrac{5}{4032}$ & $\tfrac{13}{4032}$ & $\tfrac{19}{6720}$ & $\tfrac{5}{4032}$ & $\tfrac{13}{4032}$ & $\tfrac{1}{2520}$ \\
		\bottomrule
	\end{tabular}
	\caption{$\tau(G,S)$ when $\soc(G)=\psl(3,4)$}
\end{table}

\noindent In particular $\tilde\nu(\psl(3,4))=13/4032.$

\end{document}